\theoremstyle{definition}
\newtheorem{rem}[section]{Remark}
\theoremstyle{plain}
\newtheorem{prop}[section]{Proposition}
\newtheorem{thm}[section]{Theorem}
\newtheorem{lem}[section]{Lemma}
\newcommand{\mbb}{\mathbb}
\newcommand{\mbf}{\mathbf}
\newcommand{\mrm}{\mathrm}
\renewcommand{\d}{\mbf d}
\newcommand{\e}{\mbf e}
\newcommand{\f}{\mbf f}
\renewcommand{\k}{\mbf k}
\newcommand{\dUi}{\dot {\mathbf U}^{\imath}}
\newcommand{\Ui}{\mathbf U^{\imath}}
\newcommand{\Uj}{\mathbf U^{\jmath}}
\newcommand{\Si}{\mbf S^{\imath}}
\newcommand{\Sj}{\mbf S^{\jmath}}
\renewcommand{\t}{t}
\newcommand{\bj}{\mbf j}
\title[Canonical bases of $\Ui(\mathfrak{sl}_2)$]{On canonical bases  of Letzter algebra $\Ui(\mathfrak{sl}_2)$}
\author[Yiqiang Li]{Yiqiang Li${}^{\dagger}$}
\thanks{$^{\dagger}$Partially supported by the NSF  grant DMS-1801915.}
\address{University at Buffalo \\ the State University of New York}
\email{yiqiang@buffalo.edu}
\keywords{} 
\subjclass{}
\begin{document}

\maketitle



\section*{introduction}
Let $\Ui\equiv\Ui(\mathfrak{sl}_2)$ be Letzter's coideal subalgebra of quantum $\mathfrak{sl}_2$ 
corresponding to the symmetric pair $(\mathfrak{sl}_2(\mbb C),\mbb C)$ (\cite{Le02}).
As a subalgebra of quantum $\mathfrak{sl}_2$, $\Ui$ is generated by the sum
$\mbf E + v\mbf K\mbf F+\mbf K$ of standard generators, and hence can be identified with the polynomial ring $\mbb Q(v)[t]$. 
In~\cite{BW13} and~\cite{LW18},  two distinguished bases, called $\imath$canonical bases, are constructed inside  the modified form of $\Ui$
via algebraic and geometric approaches respectively. 
The modified form of $\Ui$ can be  identified with  
a direct sum of two copies of   $\Ui\cong\mbb Q(v)[\t]$ itself.
An explicit and elegant formula, as a polynomial in $t$, of  algebraic  basis elements is conjectured in~\cite{BW13} and proved
in~\cite{BeW18}. 
The purpose of this short paper is to show that the geometric  basis in~\cite{LW18} admits the same description and, consequently,  that 
the two bases coincide.
Notice that the proofs are within the scope of {\it loc. cit.} and~\cite{BKLW}, whose notations shall be adopted here. 

\section*{The description}
Set 
$
 \llbracket n \rrbracket  = (v^n-v^{-n})/(v-v^{-1})$ and $\llbracket n\rrbracket !=\prod_{i=1}^n \llbracket i\rrbracket$.
Let $\Uj(\mathfrak{sl}_3)$ be an associative algebra over $\mbb Q(v)$ generated by $e, f, k, k^{-1}$ and subject to the following defining relations.
\begin{align*}
kk^{-1}=1,\
ke=v^3ek,\
kf=v^{-3} fk,\\
e^{2} f -\llbracket 2\rrbracket efe + f e^{2} = - \llbracket 2\rrbracket e (vk + v^{-1} k^{-1}),\\
f^2 e -\llbracket 2\rrbracket fef + ef^2 = - \llbracket 2\rrbracket (vk+v^{-1}k^{-1}) f.
\end{align*}
Let
$e^{(n)} = e^n/\llbracket n\rrbracket!$ and 
$f^{(n)}=f^n/\llbracket n\rrbracket!$.
By an induction argument and making use the above inhomogeneous Serre relations, we have the following formula in $\Uj(\mathfrak{sl}_3)$.

\begin{lem}
\label{a}
We have
$
f e^{(n+1)} = e^{(n)} ( fe - vef - \llbracket n \rrbracket (v^n k + v^{-n} k^{-1}) ) + v^{n+1} e^{(n+1)} f.
$
\end{lem}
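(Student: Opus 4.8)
The plan is to argue by induction on $n \ge 0$. The base case $n = 0$ is immediate: since $\llbracket 0 \rrbracket = 0$, the right-hand side is $e^{(0)}(fe - vef) + v\, e^{(1)} f = fe - vef + vef = fe = f e^{(1)}$.

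For the inductive step I would assume the identity at level $n$ and multiply it on the right by $e$, using freely the rule $e^{(m)} e = \llbracket m+1 \rrbracket\, e^{(m+1)}$. The left-hand side then becomes $\llbracket n+2 \rrbracket\, f e^{(n+2)}$, so it suffices to rewrite the right-hand side. Three kinds of terms occur. The term $e^{(n)}(fe - vef)e = e^{(n)}(fe^2 - vefe)$ is where the first inhomogeneous Serre relation is used: it substitutes $fe^2 = \llbracket 2 \rrbracket efe - e^2 f - \llbracket 2 \rrbracket e(vk + v^{-1}k^{-1})$, and since $\llbracket 2 \rrbracket - v = v^{-1}$ the result collapses to $e^{(n)}\bigl(v^{-1}efe - e^2 f - \llbracket 2 \rrbracket e(vk + v^{-1}k^{-1})\bigr)$. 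The Cartan term $-\llbracket n \rrbracket\, e^{(n)}(v^n k + v^{-n}k^{-1})e$ is rewritten by moving $e$ to the left with $ke = v^3 ek$ and $k^{-1}e = v^{-3}ek^{-1}$, which turns it into $-\llbracket n \rrbracket \llbracket n+1 \rrbracket\, e^{(n+1)}(v^{n+3}k + v^{-n-3}k^{-1})$. The remaining term $v^{n+1} e^{(n+1)} fe$ is left untouched; note that the combination $fe - vef$ is allowed to persist on the right, since it is exactly what the statement predicts, so there is no circularity.

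Collecting everything, the coefficient of $e^{(n+1)}fe$ is $v^{-1}\llbracket n+1 \rrbracket + v^{n+1} = \llbracket n+2 \rrbracket$, the $e^{(n+2)}f$ contributions combine with coefficient $-\llbracket n+1 \rrbracket \llbracket n+2 \rrbracket$, and the two Cartan pieces merge into $-\llbracket n+1 \rrbracket\, e^{(n+1)}\bigl[\llbracket 2 \rrbracket(vk + v^{-1}k^{-1}) + \llbracket n \rrbracket(v^{n+3}k + v^{-n-3}k^{-1})\bigr]$. Dividing by $\llbracket n+2 \rrbracket$ and comparing with the target identity at level $n+1$, which rewrites (using $v^{n+2} - v\llbracket n+2 \rrbracket = -\llbracket n+1 \rrbracket$) as $f e^{(n+2)} = e^{(n+1)}fe - \llbracket n+1 \rrbracket e^{(n+2)}f - \llbracket n+1 \rrbracket e^{(n+1)}(v^{n+1}k + v^{-n-1}k^{-1})$, one sees that everything matches once the scalar identity
\[
\llbracket 2 \rrbracket(vk + v^{-1}k^{-1}) + \llbracket n \rrbracket(v^{n+3}k + v^{-n-3}k^{-1}) = \llbracket n+2 \rrbracket(v^{n+1}k + v^{-n-1}k^{-1})
\]
is verified; on the coefficient of $k$ this reads $\tfrac{v^3 - v^{-1}}{v - v^{-1}} + \tfrac{v^{2n+3} - v^3}{v - v^{-1}} = \tfrac{v^{2n+3} - v^{-1}}{v - v^{-1}}$, and the coefficient of $k^{-1}$ follows by the substitution $v \leftrightarrow v^{-1}$.

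I expect the only real obstacle to be bookkeeping: tracking whether the $e$'s stand to the left or the right of the Cartan elements, and correctly telescoping the various $\llbracket\,\cdot\,\rrbracket$-coefficients that arise. No conceptual ingredient beyond the first Serre relation and the commutation of $k^{\pm 1}$ with $e$ is needed; in particular the second Serre relation is not used, which fits with the asserted formula being linear in $f$.
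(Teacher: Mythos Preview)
Your proof is correct and follows exactly the approach the paper indicates: an induction on $n$ using the first inhomogeneous Serre relation together with the commutation rules $ke=v^3ek$, $k^{-1}e=v^{-3}ek^{-1}$. The paper only states ``by an induction argument and making use of the above inhomogeneous Serre relations'' without supplying details, and your computation fills in precisely that sketch.
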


Let $\Sj_{3,d}$ be $\Sj$ in~\cite[3.1]{BKLW} for $n=1$ and  $\e, \f, \k=\d_1\d_{2}^{-1}$ its generators. 
The assignments $e\mapsto \e$, $f\mapsto \f$ and $k^{\pm 1}\mapsto \k^{\pm 1}$ define an algebra homomorphism $\Uj(\mathfrak{sl}_3) \to \Sj_{3,d}$.
We set
\[
A_{a,b} =
\begin{bmatrix}
a & 0 & b\\
0 & 1 & 0 \\
b & 0 & a
\end{bmatrix}, \quad a, b\in \mbb N.
\]
Let
$
\bj_d= [A_{d,0}],
$
an idempotent in $\Sj_{3,d}$ ({\it loc. cit.} 3.17). We consider the subalgebra
$
\Si_{2,d}=\bj_d\Sj_{3,d}\bj_d
$
and its integral form $_{\mathcal A}\Si_{2,d}$ with $\mathcal A=\mbb Z[v, v^{-1}]$ (~\cite[4.1]{LW18}).
Let
\[
\mbf t_d=\left (\f \e + \frac{\k-\k^{-1}}{v-v^{-1}} \right ) \bj_d  \in \Si_{2,d}.
\]

\begin{lem}
\label{b}
In $\Si_{2,d}$, one has
$
\f^{(n)} \e^{(n)} \bj_{d} = ( \mbf t_d + \llbracket d -1\rrbracket ) (\mbf t_d + \llbracket d-3\rrbracket) \cdots (\mbf t_d + \llbracket d-  2n+1\rrbracket)/\llbracket n\rrbracket !.
$
\end{lem}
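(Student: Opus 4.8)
The plan is to prove the identity by induction on $n$, stripping off one linear factor at each step by means of Lemma~\ref{a}. The case $n=0$ is trivial (both sides equal $\bj_d$), so suppose the formula holds for some $n\ge 0$. Two facts about the action of the generators on the diagonal idempotent $\bj_d=[A_{d,0}]$, recorded in~\cite{BKLW}, will be used: $\f\bj_d=0$, and $\k^{\pm1}$ acts on $\bj_d$ by the scalar $v^{\mp(d-1)}$. Unwinding the definition of $\mbf t_d$ then gives, in $\Si_{2,d}$,
\[
\f\e\bj_d=\mbf t_d+\llbracket d-1\rrbracket\bj_d .
\]

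Next I would transport Lemma~\ref{a} to $\Sj_{3,d}$ along the homomorphism $\Uj(\mathfrak{sl}_3)\to\Sj_{3,d}$ and apply the resulting identity to $\bj_d$. Because $\f\bj_d=0$, the summands $-v\,\e^{(n)}\e\f\bj_d$ and $v^{n+1}\e^{(n+1)}\f\bj_d$ vanish, while $v^n\k+v^{-n}\k^{-1}$ acts on $\bj_d$ by $v^{\,n+1-d}+v^{\,d-1-n}$; hence
\[
\f\,\e^{(n+1)}\bj_d=\e^{(n)}\f\e\bj_d-\llbracket n\rrbracket\bigl(v^{\,n+1-d}+v^{\,d-1-n}\bigr)\e^{(n)}\bj_d .
\]
Multiplying on the left by $\f^{(n)}$ and using $\f^{(n)}\f=\llbracket n+1\rrbracket\f^{(n+1)}$ yields
\[
\llbracket n+1\rrbracket\,\f^{(n+1)}\e^{(n+1)}\bj_d=\f^{(n)}\e^{(n)}\f\e\bj_d-\llbracket n\rrbracket\bigl(v^{\,n+1-d}+v^{\,d-1-n}\bigr)\f^{(n)}\e^{(n)}\bj_d .
\]
The one structural point needed here is that $\f\e\bj_d\in\Si_{2,d}$, so $\bj_d\cdot\f\e\bj_d=\f\e\bj_d$ and therefore $\f^{(n)}\e^{(n)}\f\e\bj_d=\bigl(\f^{(n)}\e^{(n)}\bj_d\bigr)\bigl(\f\e\bj_d\bigr)$, a product of elements of the commutative algebra $\Si_{2,d}$.

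It then remains to substitute $\f\e\bj_d=\mbf t_d+\llbracket d-1\rrbracket\bj_d$ and to invoke the elementary $v$-number identity $\llbracket d-1\rrbracket-\llbracket d-2n-1\rrbracket=\llbracket n\rrbracket\bigl(v^{\,n+1-d}+v^{\,d-1-n}\bigr)$, which together transform the right-hand side of the last display into $\bigl(\f^{(n)}\e^{(n)}\bj_d\bigr)\bigl(\mbf t_d+\llbracket d-2n-1\rrbracket\bj_d\bigr)$. Feeding in the inductive hypothesis for $\f^{(n)}\e^{(n)}\bj_d$ and dividing by $\llbracket n+1\rrbracket$ gives the formula for $n+1$ and closes the induction. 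I expect the only genuinely delicate points to be matters of bookkeeping: pinning down the exact scalar by which $\k$ acts on $\bj_d$ in the conventions of~\cite{BKLW} (and hence the sign in $\mbf t_d$, calibrated against the $n=1$ case), and checking that the intermediate expressions really do lie in $\Si_{2,d}$ so that the factorization above is legitimate; the remaining computations with $v$-numbers and divided powers are routine.
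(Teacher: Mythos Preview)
Your proof is correct and follows essentially the same route as the paper: induction on $n$ via Lemma~\ref{a}, using $\f\bj_d=0$ and $\k\bj_d=v^{1-d}\bj_d$, together with the identity $\llbracket d-1\rrbracket-\llbracket n\rrbracket(v^{n+1-d}+v^{d-1-n})=\llbracket d-2n-1\rrbracket$. One small caution: calling $\Si_{2,d}$ ``commutative'' at this stage risks circularity, since that is only established in Proposition~\ref{defining} as a consequence of the present lemma; but you do not actually need it, since both $\f^{(n)}\e^{(n)}\bj_d$ (by induction) and $\f\e\bj_d$ are polynomials in the single element $\mbf t_d$ and hence commute automatically.
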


\begin{proof}
When $n=1$, this is the defining relation for $\mbf t_d$ in~\cite[Remark 5.3]{BKLW}.
Assume  the statement holds for $n$. 
Due to Lemma~\ref{a}, $\f\bj_d=0$ and $\k\bj_d= v^{1-d}$,
we have 
\begin{align*}
\f^{(n+1)} \e^{(n+1)}\bj_d 
& =
\frac{1}{\llbracket n+1\rrbracket} \f^{(n)} \f \e^{(n+1)} \bj_d
=
\frac{1}{\llbracket n+1\rrbracket}
\f^{(n)} \e^{(n)} ( \f \e - \llbracket n\rrbracket ( v^n v^{1-d} + v^{-n} v^{d-1})) \bj_d\\
& = 
\f^{(n)} \e^{(n)}  ( \mbf t_d + \llbracket d-1\rrbracket - \llbracket n\rrbracket ( v^n v^{1-d} + v^{-n} v^{d-1}))\bj_d/\llbracket n+1\rrbracket\\
& =  
\f^{(n)} \e^{(n)} (\mbf t_d + \llbracket (d-1)-2n\rrbracket)\bj_d/\llbracket n+1\rrbracket\\
& =
 ( \mbf t_d + \llbracket d -1\rrbracket ) (\mbf t_d + \llbracket d-3 \rrbracket) \cdots (\mbf t_d + \llbracket d - 2n-1\rrbracket)/\llbracket n+1\rrbracket !.
\end{align*}
Lemma follows by induction. 
\end{proof}

Lemma~\ref{b} provides a characterization of  $\Si_{2,d}$ as follows.

\begin{prop}
\label{defining}
The algebra $\Si_{2,d}$ is isomorphic to the quotient algebra of $\mbb Q(v)[\t]$ by the ideal generated by the polynomial
$
( \t + \llbracket d -1\rrbracket ) (\t + \llbracket d-3\rrbracket) \cdots (\t + \llbracket - d- 1\rrbracket).
$
\end{prop}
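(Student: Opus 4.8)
The plan is to show that $\t\mapsto\mbf t_d$ induces the claimed isomorphism. Write $P(\t)=\prod_{i=0}^{d}(\t+\llbracket d-1-2i\rrbracket)=(\t+\llbracket d-1\rrbracket)(\t+\llbracket d-3\rrbracket)\cdots(\t+\llbracket -d-1\rrbracket)$ for the monic degree $d+1$ polynomial in the statement. Since $\mbb Q(v)$ is central in $\Si_{2,d}$, there is an algebra homomorphism $\phi\colon\mbb Q(v)[\t]\to\Si_{2,d}$, $\t\mapsto\mbf t_d$, and it suffices to establish: (a) $\dim_{\mbb Q(v)}\Si_{2,d}=d+1$; (b) $\phi$ is surjective, i.e. $\Si_{2,d}=\mbb Q(v)[\mbf t_d]$; (c) $P(\mbf t_d)=0$. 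Granting these, (b) forces $\Si_{2,d}$ to be commutative with $\Si_{2,d}\cong\mbb Q(v)[\t]/\ker\phi$, so $\ker\phi=(m)$ for a monic polynomial $m$; (a) gives $\deg m=d+1$; (c) gives $m\mid P$; hence $m=P$ by comparing degrees, and $\Si_{2,d}\cong\mbb Q(v)[\t]/(P(\t))$ as claimed.

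For (a) I would recall from \cite{BKLW} that $\Sj_{3,d}$ has basis $\{[A]\}$ indexed by the admissible $3\times3$ matrices $A$ with $\sum_{ij}a_{ij}=2d+1$, and that $\Si_{2,d}=\bj_d\Sj_{3,d}\bj_d$ is spanned by those $[A]$ with $\mrm{ro}(A)=\mrm{co}(A)=(d,1,d)$. The $180^{\circ}$-rotation symmetry of an admissible matrix together with these row and column sums forces $A=A_{a,b}$ with $a+b=d$, $a,b\in\mbb N$; thus $\{[A_{a,b}]:a+b=d\}$ is a basis of $\Si_{2,d}$, of cardinality $d+1$. For (b), note that $\bj_d=[A_{d,0}]=[\mrm{diag}(d,1,d)]$ is the identity idempotent of the weight $(d,1,d)$ and that $\e,\f,\k^{\pm1}$ generate $\Sj_{3,d}$; straightening a word in these generators into the PBW form $\f^{a}\k^{c}\e^{b}$ and using that $\f^{a}\e^{b}$ is weight homogeneous, together with $\f\bj_d=0$ and $\k\bj_d=v^{1-d}\bj_d$, shows that $\Si_{2,d}$ is spanned by the elements $\f^{n}\e^{n}\bj_d$, $n\ge0$. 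By Lemma~\ref{b} each of these lies in the subring $\mbb Q(v)[\mbf t_d]$, whence $\Si_{2,d}=\mbb Q(v)[\mbf t_d]$. (Alternatively one may quote the surjectivity of the Schur homomorphism $\Ui\to\Si_{2,d}$ from \cite{LW18}.)

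The main point is (c). Taking $n=d+1$ in Lemma~\ref{b} gives
\[
P(\mbf t_d)=\llbracket d+1\rrbracket!\;\f^{(d+1)}\e^{(d+1)}\bj_d ,
\]
so it is enough to show $\e^{(d+1)}\bj_d=0$ in $\Sj_{3,d}$. I would deduce this from the Chevalley-type multiplication rule for $\e^{(m)}\cdot[A]$ in $\Sj_{3,d}$ recorded in \cite{BKLW}: starting from $\bj_d=[A_{d,0}]$, every admissible chain of $\e$-moves consumes from a pool of entries bounded by $d$, so after $d+1$ steps no term survives. (For $d=0,1$ this is the visible vanishing $\e\bj_0=0$ and $\e^{(2)}\bj_1=0$.) I expect this bookkeeping in the multiplication formula to be the main obstacle of the proof. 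A convenient alternative, which handles (b) and (c) simultaneously once (a) is known, is to compute directly from the convolution product of \cite{LW18} the action of $\mbf t_d$ on the basis $\{[A_{a,b}]:a+b=d\}$: it is triangular with the $d+1$ pairwise distinct scalars $-\llbracket d-1-2i\rrbracket$, $0\le i\le d$, on the diagonal, so $\mbb Q(v)[\mbf t_d]$ already has dimension $d+1$ and the characteristic polynomial of $\mbf t_d$ equals $P(\t)$.
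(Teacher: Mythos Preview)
Your argument is essentially the paper's: establish the surjection $t\mapsto\mbf t_d$, use Lemma~\ref{b} with $n=d+1$ together with $\f^{(d+1)}\e^{(d+1)}\bj_d=0$ to get $P(\mbf t_d)=0$, and compare dimensions. One caution on step~(b): the straightening into ``PBW form $\f^{a}\k^{c}\e^{b}$'' is not immediate here, since $\Uj(\mathfrak{sl}_3)$ carries only the inhomogeneous Serre relations and no $ef$--$fe$ commutator; the paper simply asserts surjectivity (implicitly quoting \cite{LW18}/\cite{BKLW}), and either your parenthetical alternative or your closing eigenvalue argument cleanly fills this.
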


\begin{proof}
The map $\t\mapsto \mbf t_d$ defines a surjective algebra homomorphism $\phi^{\imath}_d: \mbb Q(v)[t] \to \Si_{2,d}$. 
Due to $\f^{(d+1)} \e^{(d+1)} \bj_d=0$ and Lemma~\ref{a}, the polynomial above is zero in $\Si_{2,d}$ for $\t=\mbf t_d$. 
So  $\phi^{\imath}_d$  factors through the desired quotient. 
Clearly the dimensions of $\Si_{2,d}$ and the quotient algebra are the same, so they must be isomorphic. 
The proposition is thus proved.
\end{proof}

Define an equivalence relation on the set $\{A_{a,b}| a, b\in \mbb N\}$ by $A_{a, b} \sim A_{a', b'}$ if $a\equiv a'$ (mod $2$) and $b=b'$.
Let $\check A_{a, b}$ be the equivalence class of $A_{a, b}$. As a $\mbb Q(v)$-vector space,
the modified form $\dUi$ of $\Ui$ 
is spanned by the canonical basis elements $b_{\check A_{0,d}}$ and $b_{\check A_{1,d}}$ for $d\in \mbb N$. 
Let 
$\dUi_{ 0} = \mrm{Span} \{ b_{\check A_{0,d}}, b_{\check A_{1,d+1}}| d\ \mbox{even}\}$ and  
$\dUi_{1} = \mrm{Span} \{ b_{\check A_{0,d}}, b_{\check A_{1,d-1}}| d\ \mbox{odd}\}.$
Then $\dUi =\dUi_{ 0} \oplus \dUi_{1}$ as algebras.
We have an isomorphism $\dUi_{0} \to \mbb Q(v) [\t]$  (resp. $\dUi_{ 1}\to \mbb Q(v)[\t]$) via $b_{\check A_{1,1}} \mapsto \t$ 
(resp. $b_{\check A_{0, 1}}\mapsto \t$). 
The isomorphisms are  compatible with $\phi^{\imath}_d$.
We have the following explicit description\footnote{We thank Weiqiang Wang for pointing out a gap in an early draft, and helpful comments.} 
of  geometrically-defined canonical basis elements of $\dUi$.

\begin{thm}
\label{desc}
The canonical basis elements of $\dUi\equiv\dUi(\mathfrak{sl}_2)$  in~ \cite{LW18} are  of the form
\begin{align}
\label{c}
b_{\check A_{0,d}}= 
\frac{( \t + \llbracket d -1\rrbracket ) (\t + \llbracket d-3\rrbracket) \cdots (\t + \llbracket -d+3\rrbracket) ( \t + \llbracket -d+1\rrbracket)}{\llbracket d\rrbracket !},
\ \forall d\in \mbb N;\\
\label{d}
b_{\check A_{1,d+1}}
= 
\frac{t \cdot ( \t + \llbracket d -1\rrbracket ) (\t + \llbracket d-3\rrbracket) \cdots (\t + \llbracket -d+3\rrbracket) ( \t +\llbracket -d+1\rrbracket)}{\llbracket d+1\rrbracket !},
\ \forall d\in \mbb N.
\end{align}
\end{thm}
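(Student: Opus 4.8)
The plan is to prove \eqref{c} and \eqref{d} by pushing both sides to the finite‑dimensional quotients $\Si_{2,d}$ of $\dUi$, where Lemma~\ref{b} and Proposition~\ref{defining} make everything an explicit polynomial computation. For $D$ with $D\equiv d\pmod 2$ let $\pi_D\colon\dUi\to\Si_{2,D}$ denote the composite of the projection to the relevant summand, the isomorphism of that summand with $\mathbb Q(v)[\t]$, and $\phi^{\imath}_D$. I would take from \cite{LW18} and \cite{BKLW} two structural inputs. First, the geometrically‑defined canonical basis of $\dUi$ is compatible, under the $\pi_D$, with the geometric canonical basis $\{[A]\}$ of $\Si_{2,D}$ (indeed the former is built from the latter by stabilization): each $\pi_D(b_{\check A})$ is a canonical basis element of $\Si_{2,D}$ or $0$, and for the smallest admissible $D$ it is the element indexed by the representative of the class $\check A$ that fits into $\Si_{2,D}$; concretely $\pi_d(b_{\check A_{0,d}})=[A_{0,d}]$ and $\pi_{d+2}(b_{\check A_{1,d+1}})=[A_{1,d+1}]$. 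Second, the canonical basis of $\dUi$ is unitriangular with respect to a standard basis refining $\t$‑degree, so that $\deg_{\t}b_{\check A_{0,d}}\le d$ and $\deg_{\t}b_{\check A_{1,d+1}}\le d+1$ under the identification with $\mathbb Q(v)[\t]$.

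Next I would pin down the relevant canonical basis elements of the small Schur algebras. Since $\e^{(n)},\f^{(n)},\bj_d$ are each bar‑invariant and integral, $\f^{(d)}\e^{(d)}\bj_d$ is a bar‑invariant integral element of $\Si_{2,d}$ with leading term $[A_{0,d}]$; as $A_{0,d}$ is the maximal element of the relevant Bruhat‑type poset there is nothing strictly below it to correct, so $\f^{(d)}\e^{(d)}\bj_d=[A_{0,d}]$. By the same circle of ideas (the rank‑one $\jmath$‑geometry is small enough that only multiplicities $0$ and $1$ occur) one gets $\f^{(d+1)}\e^{(d+1)}\bj_{d+2}=[A_{1,d+1}]+[A_{2,d}]$ in $\Si_{2,d+2}$. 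Now Lemma~\ref{b} converts these into polynomial identities: with $n=d$ it gives $[A_{0,d}]=\phi^{\imath}_d(P_d)$, where $P_d$ is the right‑hand side of \eqref{c}; and with $n=d+1$ and $d$ replaced by $d+2$, a short rearrangement — the vanishing bracket $\llbracket 0\rrbracket$ supplying the leading $\t$ in \eqref{d} — gives $\f^{(d+1)}\e^{(d+1)}\bj_{d+2}=\phi^{\imath}_{d+2}(P_d+Q_d)$, where $Q_d$ is the right‑hand side of \eqref{d}.

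Then I would conclude as follows. From the first input and Lemma~\ref{b}, $\pi_d(b_{\check A_{0,d}})=[A_{0,d}]=\phi^{\imath}_d(P_d)$, so, viewed in $\mathbb Q(v)[\t]$, the element $b_{\check A_{0,d}}-P_d$ lies in the ideal generated by the degree‑$(d+1)$ polynomial of Proposition~\ref{defining}; since $\deg_{\t}(b_{\check A_{0,d}}-P_d)\le d$ by the second input, it must be $0$, which is \eqref{c}. For \eqref{d}, \eqref{c} identifies $[A_{2,d}]=\pi_{d+2}(b_{\check A_{0,d}})=\phi^{\imath}_{d+2}(P_d)$, whence $\pi_{d+2}(b_{\check A_{1,d+1}})=[A_{1,d+1}]=\phi^{\imath}_{d+2}(P_d+Q_d)-\phi^{\imath}_{d+2}(P_d)=\phi^{\imath}_{d+2}(Q_d)$, and the same degree argument ($d+1<d+3$) yields \eqref{d}. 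As a variant, once \eqref{c} is established, \eqref{d} follows from the canonical‑basis product formula $b_{\check A_{1,1}}\cdot b_{\check A_{0,d}}=\llbracket d+1\rrbracket\,b_{\check A_{1,d+1}}$ (a short geometric convolution computation in the framework of \cite{LW18}), since $\t\cdot P_d=\llbracket d+1\rrbracket\,Q_d$.

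The main obstacle is the first structural input together with the monomial identifications $\f^{(d)}\e^{(d)}\bj_d=[A_{0,d}]$ and $\f^{(d+1)}\e^{(d+1)}\bj_{d+2}=[A_{1,d+1}]+[A_{2,d}]$: these are the genuinely geometric ingredients, resting on the smallness/aperiodicity of the orbit geometry of the rank‑one $\jmath$‑Schur algebra, and are within the scope of \cite{LW18} and \cite{BKLW}; everything else is the polynomial bookkeeping supplied by Lemma~\ref{b} and Proposition~\ref{defining}.
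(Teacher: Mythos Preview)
Your treatment of \eqref{c} is essentially the paper's: identify $\f^{(d)}\e^{(d)}\bj_d$ with the canonical basis element $\{A_{0,d}\}$, apply Lemma~\ref{b}, and invoke compatibility of $\phi^{\imath}_d$ with canonical bases. (Your bar-invariance argument is garbled, though: $A_{0,d}$ is the \emph{open} orbit, so everything lies below it, and ``nothing to correct'' is not the reason. The paper instead identifies $\f^{(d)}\e^{(d)}\bj_d$ directly as the constant sheaf on the smooth product of maximal isotropic Grassmannians, which is therefore its own IC sheaf.) The explicit degree argument you add, via Proposition~\ref{defining}, is a welcome clarification of the paper's terse ``So $b_{\check A_{0,d}}=P_{0,d}(t)$.''

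For \eqref{d} there is a genuine gap. Your proof rests on the identity
\[
\f^{(d+1)}\e^{(d+1)}\bj_{d+2}=\{A_{1,d+1}\}+\{A_{2,d}\},
\]
which, once granted, makes the remaining steps trivial; but your justification (``the rank-one $\jmath$-geometry is small enough that only multiplicities $0$ and $1$ occur'') is not a proof and is in fact false for other values of $n$: already $\f\e\,\bj_{d+2}=\mbf t_{d+2}+\llbracket d{+}1\rrbracket=\{A_{d+1,1}\}+\llbracket d{+}1\rrbracket\,\{A_{d+2,0}\}$ has a nonconstant coefficient. The identity above is \emph{equivalent}, given \eqref{c}, to $P_{1,d+1}(\mbf t_{d+2})=\{A_{1,d+1}\}$, which is exactly the statement to be proved; so asserting it is circular. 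The paper supplies the missing argument: it shows $P_{1,d+1}(\mbf t_{d+2})$ is integral, writes it as $\frac{1}{\llbracket d+1\rrbracket}\mbf t_{d+2}\cdot\{A_{2,d}\}$, invokes the positivity theorem of \cite{LW18} to get $P_{1,d+1}(\mbf t_{d+2})=\{A_{1,d+1}\}+\sum_{i>1}c_i\{A_{i,d+2-i}\}$ with $c_i\in\mbb N[v,v^{-1}]$, and then uses the transfer map $\phi^{\imath}_{d+2,d}$ together with the already-established \eqref{c} to force all $c_i=0$. That positivity-plus-transfer step is the heart of the proof of \eqref{d}, and your proposal skips it. Your ``variant'' via the product formula $b_{\check A_{1,1}}\cdot b_{\check A_{0,d}}=\llbracket d{+}1\rrbracket\,b_{\check A_{1,d+1}}$ has the same problem: in the paper this formula is a \emph{corollary} of Theorem~\ref{desc}, and an independent geometric proof of it is not shorter than the paper's argument.
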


\begin{proof}
Let us denote the polynomial in (\ref{c}) by $P_{0,d}(\t)$ and that in (\ref{d}) by $P_{1,d+1}(\t)$.
By Lemma~\ref{b}, we have $\f^{(d)} \e^{(d)} \bj_d=P_{0, d}(\mbf t_d)$.
Observe that the element $\f^{(d)} \e^{(d)} \bj_d$ is a canonical basis element in $\Si_{2,d}$, corresponding to the constant sheaf on the product of maximal isotropic 
Grassmannians. 
Indeed, by~\cite[Thm. 3.7(a)]{BKLW}, we have
$
\f^{(d)} \e^{(d)} \bj_d =\sum_{i=0}^d v^{-\binom{i}{2}} [A_{i, d-i}]
.
$
By~\cite[Prop. 6.3]{LW18}, the canonical basis elements of $\dUi$ get sent to  canonical basis elements in $\Si_{2,d}$  or zero
via $\phi^{\imath}_d$.
So $b_{\check A_{0,d}}=P_{0,d}(\t)$, and (\ref{c}) holds.

Now consider the element $P_{1,d+1}(\mbf t_{d+2})$ in $\Si_{2, d+2}$. 
By rewriting the factor  $t$ in $P_{1, d+1}(t)$ as $(t+\llbracket d+1\rrbracket) - \llbracket d+1\rrbracket$ and combining with the remaining terms, there is
\begin{align*}
P_{1,d+1} (\mbf t_{d+2}) = \f^{(d+1)} \e^{(d+1)} \bj_{d+2} - P_{0, d} (\mbf t_{d+2}).
\end{align*}
Hence $P_{1,d+1}(\mbf t_{d+2})$ is in the integral form ${}_{\mathcal A} \Si_{2, d}$.
Further, the polynomial $P_{1, d+1}(\mbf t_{d+2})$ can be rewritten as
\begin{align*}
P_{1,d+1} (\mbf t_{d+2}) =P_{0,d} (\mbf t_{d+2})+
\frac{
( \mbf t_{d+2} + \llbracket d -1\rrbracket ) (\mbf t_{d+2} + \llbracket d-3\rrbracket) \cdots (\mbf t_{d+2} + \llbracket - d- 1\rrbracket)
}{\llbracket d+1\rrbracket!}.
\end{align*}
With the above expression, Proposition~\ref{defining} and the property of the transfer map $\phi^{\imath}_{d+2,d}$ in~\cite[(6.6)]{LW18} and~\cite{FL19}, we must have 
\begin{align}
\label{transfer}
\phi^{\imath}_{d+2,d} (P_{1,d+1}(\mbf t_{d+2})) = P_{0, d}(\mbf t_d). 
\end{align}
By the definition of $P_{1,d+1}(\t)$, there is $P_{1,d+1}(\mbf t_{d+2}) = \frac{1}{\llbracket d+1\rrbracket} \mbf t_{d+2} * \{A_{2,d}\}$ and so, in light of
the positivity property in~\cite[Theorem 6.12]{LW18}, it leads to
\[
P_{1,d+1}(\mbf t_{d+2}) =\{A_{1,d+1}\} + \sum_{i>1} c_i \{ A_{i, d+2-i}\}, \quad c_i\in \mbb N[v, v^{-1}].
\]
The positivity of $c_i$ together with (\ref{transfer})  and {\it loc. cit.} (6.6) implies that  $c_i=0$ and so
\[
P_{1,d+1}(\mbf t_{d+2}) =\{ A_{1, d+1}\} \in \Si_{2,d+2}.
\]
Therefore, we have $b_{\check A_{1, d+1}} =P_{1,d+1}(\t)$ and (\ref{d}) holds. 
The proof is finished.
\end{proof}

We conclude the paper with a remark.

\begin{rem}
(1) The canonical basis of $\Si_{2,d}$ is 
$\{
P_{0, d-2i}(\mbf t_d), P_{1, d-2i-1}(\mbf t_d) | 0\leq i \leq \lfloor d/2\rfloor 
\}
$.

(2) Clearly
$
b_{\check A_{0, 1}} * b_{\check A_{0, d}} = \llbracket d+1\rrbracket b_{\check A_{1, d+1}},
b_{\check A_{0, 1}} * b_{\check A_{1, d+1}} = \llbracket d+2\rrbracket b_{\check A_{0, d+2}} + \llbracket d+1\rrbracket b_{\check A_{0, d}}.
$

(3) The $\Ui$ in~\cite{BW13} and~\cite{LW18} differs by an involution 
$(\mbf E, \mbf F, \mbf K)\mapsto (\mbf F, \mbf E,\mbf K^{-1})$ on quantum $\mathfrak{sl}_2$.
Hence the canonical bases constructed therein coincide.
\end{rem}

\end{document}